\documentclass[12pt]{article}

\usepackage[T1]{fontenc}
\usepackage{lmodern}
\usepackage{amsmath,amssymb,amsthm,mathtools}
\usepackage{aliascnt}
\usepackage{microtype}
\usepackage[hidelinks]{hyperref}
\usepackage{enumitem}

\numberwithin{equation}{section}

\newtheorem{theorem}{Theorem}[section]
\newaliascnt{lemma}{theorem}
\newtheorem{lemma}[lemma]{Lemma}
\aliascntresetthe{lemma}
\newaliascnt{corollary}{theorem}
\newtheorem{corollary}[corollary]{Corollary}
\aliascntresetthe{corollary}
\newaliascnt{proposition}{theorem}
\newtheorem{proposition}[proposition]{Proposition}
\aliascntresetthe{proposition}
\newaliascnt{problem}{theorem}
\newtheorem{problem}[problem]{Problem}
\aliascntresetthe{problem}
\theoremstyle{remark}
\newaliascnt{remark}{theorem}
\newtheorem{remark}[remark]{Remark}
\aliascntresetthe{remark}
\usepackage[nameinlink,capitalize,noabbrev]{cleveref}

\newcommand{\one}{\mathbf{1}}
\newcommand{\tr}{\operatorname{tr}}

\newcommand{\ip}[2]{\left\langle #1,#2\right\rangle}
\newcommand{\splus}{s^{+}}
\newcommand{\sminus}{s^{-}}

\newenvironment{pf}[1][Proof]{\noindent\textbf{#1.} }{\hfill\rule{1mm}{2mm} \par\vspace{0.6\baselineskip}}

\begin{document}

\title{From the Square-Energy Conjecture to Signed Graphs: Sharp Bounds for Positive Square Energy}
\author{Fu-Tao Hu\thanks{E-mail address: hufu@ahu.edu.cn}, Xiao Han \\
{\small Center for Pure Mathematics, School of Mathematical Sciences, Anhui University,}\\
{\small Hefei, 230601, P.R. China}
}

\date{}
\maketitle

\begin{abstract}
Let $\Sigma=(G,\sigma)$ be a connected signed graph of order $n$ and size $m$, and let $s^{+}(\Sigma)$ and $s^{-}(\Sigma)$ denote the sums of the squares of its positive and negative adjacency eigenvalues, respectively.  The square-energy conjecture of Elphick, Farber, Goldberg, and Wocjan states that every connected graph $G$ of order $n$ satisfies
\[
  \min\{s^{+}(G),s^{-}(G)\}\ge n-1.
\]
Liu and Ning~\cite{LiuNing2023} published a wide-ranging paper entitled ``Unsolved Problems in spectral graph theory", and
this conjectures were placed first in their list of such problems.
We prove that every signature $\sigma$ of a connected graph $G$ satisfies the sharp bound
\[
  s^{+}(\Sigma)\le 2m-n+1.
\]
For the all-positive signing this gives $s^{+}(G)\le 2m-n+1$, whereas for the all-negative signing it gives $s^{-}(G)\le 2m-n+1$.  Since $s^{+}(G)+s^{-}(G)=2m$, these two special cases imply the square-energy conjecture; the present theorem is stronger in scope because the same bound holds for every signing of $G$.  Applying the theorem to the negation $-\Sigma$ also yields
\[
  s^{+}(\Sigma)\ge n-1.
\]
Both bounds are sharp.  The proof is based on a doubly nonnegative matrix inequality.  We also shorten the proof of that inequality by replacing its final case distinction with a fixed convex combination.

\vskip10pt\noindent {\bf Key words:} signed graph, positive square energy, adjacency spectrum,
switching, doubly nonnegative matrix, semidefinite method.
 
\vskip10pt\noindent {\bf AMS Subject Classification:} {Primary 05C50; Secondary 05C22, 15A18,
15A42.}
\end{abstract}

\section{Introduction}

A signed graph is a pair $\Sigma=(G,\sigma)$, where $G$ is a graph and the signature
\[
  \sigma:E(G)\longrightarrow\{+1,-1\}
\]
assigns a sign to each edge.  The subject goes back to Harary's theory of structural balance~\cite{Harary1953} and was developed systematically by Zaslavsky~\cite{Zaslavsky1982}.  Switching, balance, and the interaction between the signature and the underlying graph are central throughout the theory.  For spectral aspects, we refer to the survey of Belardo, Cioab\u{a}, Koolen, and Wang~\cite{BelardoEtAl2018} and the monograph of Stani\'c~\cite{Stanic2026}.

The adjacency matrix $A(\Sigma)$ is the symmetric $\{0,\pm1\}$-matrix whose $uv$-entry is $\sigma(uv)$ when $uv\in E(G)$ and is $0$ otherwise.  Switching conjugates $A(\Sigma)$ by a diagonal $\{\pm1\}$-matrix and therefore preserves the adjacency spectrum.  If $G$ is connected, comparison with the adjacency matrix of the underlying graph gives
\[
 \lambda_1(\Sigma)\le \lambda_1(G),
 \qquad
 \lambda_n(\Sigma)\ge-\lambda_1(G).
\]
Equality in the first inequality characterizes balanced signatures, while equality in the second characterizes antibalanced signatures.  Consequently, the adjacency spectral radius of $\Sigma$ is at most that of $G$, with equality precisely in the balanced or antibalanced cases; see~\cite{BelardoEtAl2018,Stanic2026}.  In contrast with unsigned graphs, $A(\Sigma)$ need not be entrywise nonnegative, so Perron--Frobenius arguments are generally unavailable; this is one of the basic difficulties in the spectral theory of signed graphs.  The classical energy, the sum of the absolute values of all adjacency eigenvalues, has also been studied for signed graphs; see, for example, Germina, Hameed, and Zaslavsky~\cite{GerminaHameedZaslavsky2011}.

For an unsigned graph $G$, Wocjan and Elphick~\cite{WocjanElphick2013} introduced the positive and negative square energies
\[
 \splus(G):=\sum_{\lambda_i(G)>0}\lambda_i(G)^2,
 \qquad
 \sminus(G):=\sum_{\lambda_i(G)<0}\lambda_i(G)^2
\]
in connection with spectral lower bounds for the chromatic number.  Ando and Lin~\cite{AndoLin2015} proved the corresponding chromatic inequality.  Since
\begin{equation}\label{eq:unsigned-trace-intro}
  \splus(G)+\sminus(G)=\tr A(G)^2=2m,
\end{equation}
these two quantities divide the total adjacency square energy according to the signs of the eigenvalues.

Elphick, Farber, Goldberg, and Wocjan~\cite{ElphickFarberGoldbergWocjan2016} initiated the systematic study of square energies and proposed the following conjecture: every connected graph of order $n$ satisfies
\begin{equation}\label{eq:unsigned-conjecture}
  \min\{\splus(G),\sminus(G)\}\ge n-1.
\end{equation}
By~\eqref{eq:unsigned-trace-intro}, this is equivalent to the pair of upper bounds
\begin{equation}\label{eq:unsigned-equivalent-upper}
  \splus(G)\le 2m-n+1
  \qquad\text{and}\qquad
  \sminus(G)\le 2m-n+1.
\end{equation}
The conjecture was highlighted in the survey of Liu and Ning~\cite{LiuNing2023}.  It was verified for several graph classes in the original paper and in subsequent work of Abiad, de Lima, Desai, Guo, Hogben, and Madrid~\cite{AbiadEtAl2023}.  Further developments include Nordhaus--Gaddum inequalities~\cite{ElphickAouchiche2017}, the study of symmetry and asymmetry between the two square energies~\cite{ElphickLinz2024}, semidefinite and conic formulations~\cite{CoutinhoSpier2023,CoutinhoSpierZhang2024,Zhang2024}, linear lower bounds~\cite{AkbariEtAl2025}, and the wider theory of positive and negative $p$-energies~\cite{TangLiuWang2026,ElphickTangZhang2026}.  The conjecture~\eqref{eq:unsigned-conjecture} was recently proved by Liu, Tang, and Zhang~\cite{LiuTangZhang2026}.

The decisive ingredient in~\cite{LiuTangZhang2026} is an inequality for doubly nonnegative matrices indexed by the vertex set of a graph.  The inequality depends only on the underlying graph, whereas the signature enters the spectral argument through one signed sum over the edges.  This separation makes the method well suited to signed graphs.

Our main result extends the upper bounds in~\eqref{eq:unsigned-equivalent-upper} to every signing of the underlying graph.

\begin{theorem}\label{thm:main}
Let $\Sigma=(G,\sigma)$ be a connected signed graph of order $n$ and size $m$.  Then
\begin{equation}\label{eq:main-bound}
  \splus(\Sigma)\le 2m-n+1.
\end{equation}
\end{theorem}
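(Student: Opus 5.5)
The plan is to reduce \eqref{eq:main-bound} to an inequality that involves only the underlying graph $G$, by splitting $A=A(\Sigma)$ into its positive and negative parts. Write the spectral decomposition as $A=A_{+}-A_{-}$, where $A_{\pm}$ are positive semidefinite and $A_{+}A_{-}=0$. Then
\[
\splus(\Sigma)=\tr A_{+}^2=\ip{A}{A_{+}}, \qquad \sminus(\Sigma)=\tr A_{-}^2=-\ip{A}{A_{-}}, \qquad \splus(\Sigma)+\sminus(\Sigma)=\tr A^2=2m .
\]
So \eqref{eq:main-bound} is equivalent to $\sminus(\Sigma)\ge n-1$. Written out,
\[
\sminus(\Sigma)=-2\sum_{uv\in E(G)}\sigma(uv)\,(A_{-})_{uv}.
\]
This is the single signed edge sum through which the signature enters. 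Everything else depends only on $G$ and on the positive semidefinite matrix $A_{-}$.

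\textbf{Step 1: local identities.} For each vertex $u$, comparing diagonal entries of $A^2=A_{+}^2+A_{-}^2$ gives $(A_{+}^2)_{uu}+(A_{-}^2)_{uu}=\deg u$. In addition, $A_{-}=\tfrac12(|A|-A)$ with $|A|=(A^2)^{1/2}$. I will use these identities to normalise a Gram representation $A_{-}=(\ip{x_u}{x_v})_{u,v}$. The goal is to control the diagonal of $A_{-}$ and of $A_{-}^2$ from below by quantities tied to the degrees, which is where $n$ will come from.

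\textbf{Step 2: remove the signs.} Rewrite the signed edge sum as an expression in a doubly nonnegative matrix $Z$ indexed by $V(G)$. The first choice to try is to build $Z$ from the Gram vectors, with $Z_{uv}=|\ip{x_u}{x_v}|$ on edges, or from a suitable Hadamard-type product of $A_{-}$ with itself. Either construction keeps positive semidefiniteness by the Schur product theorem and makes all entries nonnegative. The intended consequence is that every signature is bounded by the same unsigned quantity. The point to check carefully is that this absolute-value step loses nothing: equality must still be attainable, which is consistent with balanced and antibalanced signings being extremal.

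\textbf{Step 3: the graph inequality.} Prove, for every connected $G$ and every doubly nonnegative $Z$ satisfying the normalisation from Step 1, an inequality of the form
\[
\tr Z^2-2\sum_{uv\in E(G)}Z_{uv}\ \ge\ n-1 .
\]
This is the analogue of the inequality of Liu, Tang and Zhang. My approach is to fix a spanning tree $T$ of $G$, so that the $n-1$ tree edges supply the required $n-1$. Non-tree edges should only help after a local $2\times2$ principal-minor argument: $Z_{uu}Z_{vv}\ge Z_{uv}^2$ combined with AM--GM. For the final balancing, I would use a fixed convex combination of the vertex and edge estimates, in place of a case distinction, as the abstract suggests.

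The main obstacle is Step 3, and specifically getting exactly $n-1$ rather than a weaker bound of order $n/2$. This needs a sharp weighting across each tree edge, and I would find it by testing the extremal case $\Sigma=K_n$ first and reverse-engineering the convex weights from it. Once Step 3 holds, the proof finishes directly: Step 3 with Step 2 gives $\sminus(\Sigma)\ge n-1$ for every signature, and then $\splus(\Sigma)=2m-\sminus(\Sigma)\le 2m-n+1$.
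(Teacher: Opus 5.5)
There is a genuine gap, and it starts with the direction of your reduction. You aim for the lower bound $\sminus(\Sigma)\ge n-1$ by working with $A_-$ and removing the signature through absolute values. That step only yields
\[
\sminus(\Sigma)=-2\sum_{uv\in E(G)}\sigma(uv)(A_-)_{uv}\le 2\sum_{uv\in E(G)}\bigl|(A_-)_{uv}\bigr| ,
\]
which is an \emph{upper} bound on $\sminus(\Sigma)$. Bounding a signed sum by an unsigned one cannot give a lower bound on the signed sum.

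Your Step~3 inequality shows the same problem. Without the never-specified ``normalisation from Step~1'' it fails already for $Z=0$. Take $Z$ to be the entrywise absolute value of $A_-$. Then $\tr Z^2=\|A_-\|_F^2=\sminus(\Sigma)\le 2\sum_{uv\in E(G)}Z_{uv}$, so the left-hand side is $\le 0<n-1$. For $Z=A_-\circ A_-$ and $\Sigma=+K_2$ the left-hand side equals $-\tfrac14$. Finally, as you acknowledge yourself, the spanning-tree and $2\times2$-minor plan has no mechanism that produces the sharp constant.

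The missing idea is to work with $A_+$ and prove the upper bound directly, using a \emph{homogeneous} graph inequality and a self-bounding step. The paper shows that for connected $G$, every doubly nonnegative $M$ indexed by $V(G)$ satisfies
\[
4\Bigl(\sum_{uv\in E(G)}\sqrt{M_{uv}}\Bigr)^2\le q(G)\,\one^\top M\one,\qquad q(G)=2m-n+1.
\]
It applies this to $M=A_+\circ A_+$, which is doubly nonnegative by the Schur product theorem. Then $\one^\top M\one=\|A_+\|_F^2=\splus(\Sigma)$. With $A_+$ the absolute-value step points the right way, via the identity you already wrote down:
\[
\splus(\Sigma)=\ip{A}{A_+}=2\sum_{uv\in E(G)}\sigma(uv)(A_+)_{uv}\le 2\sum_{uv\in E(G)}\sqrt{M_{uv}}.
\]
Hence $\splus(\Sigma)^2\le q(G)\,\splus(\Sigma)$, and dividing by $\splus(\Sigma)>0$ finishes. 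Your target $\sminus(\Sigma)\ge n-1$ then follows from $\splus+\sminus=2m$. Note that $n$ enters only through $q(G)$, not through a spanning tree.

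The graph inequality itself is proved by induction on $n$:
\begin{enumerate}
\item Fold the non-edge entries onto the diagonal via $M_{uv}(e_u-e_v)(e_u-e_v)^\top$, which preserves $\one^\top M\one$ and positive semidefiniteness.
\item At a cut vertex, split $M$ using an orthogonal decomposition of the Gram vectors, together with $q(G_1)+q(G_2)=q(G)$.
\item In the $2$-connected case, average the induction hypothesis over all $G-v$. Combine this with the edge Cauchy--Schwarz bound using weights $\tfrac{n-1}{n}$ and $\tfrac1n$. The residual coefficient is $\frac{q-(n-1)^2}{n(n-2)}\le0$.
\end{enumerate}
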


For a graph $G$, denote by $+G$ and $-G$ its all-positive and all-negative signings, respectively.  Then
\[
 A(+G)=A(G),\qquad A(-G)=-A(G),
\]
and hence
\[
 \splus(+G)=\splus(G),\qquad \splus(-G)=\sminus(G).
\]
Thus \cref{thm:main}, applied first to $+G$ and then to $-G$, gives the two inequalities in~\eqref{eq:unsigned-equivalent-upper}.  Consequently, the square-energy conjecture is contained in \cref{thm:main} as the two constant-sign cases.  The theorem is stronger in scope, since it establishes the same sharp edge bound for every signature of $G$.

Since $\splus(\Sigma)\ge \lambda_1(\Sigma)^2$, \cref{thm:main} also extends the corresponding Hong-type edge bound for the largest positive adjacency eigenvalue; compare~\cite{Hong1988}.  The lower bound follows by applying the theorem to the negation of the signed graph.

\begin{corollary}\label{cor:lower-intro}
Let $\Sigma=(G,\sigma)$ be a connected signed graph of order $n$.  Then
\begin{equation}\label{eq:lower-intro}
  \splus(\Sigma)\ge n-1.
\end{equation}
Consequently,
\[
 \min\{\splus(\Sigma):\Sigma\text{ is a connected signed graph of order }n\}=n-1.
\]
\end{corollary}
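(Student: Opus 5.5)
The plan is to apply \cref{thm:main} to the negated signed graph $-\Sigma=(G,-\sigma)$. It is again a connected signed graph of order $n$ and size $m$, and its adjacency matrix is $A(-\Sigma)=-A(\Sigma)$. The eigenvalues of $-\Sigma$ are therefore the negatives of those of $\Sigma$, which gives $\splus(-\Sigma)=\sminus(\Sigma)$, where $\sminus(\Sigma)$ denotes the sum of squares of the negative eigenvalues of $A(\Sigma)$. \cref{thm:main} then yields
\[
 \sminus(\Sigma)=\splus(-\Sigma)\le 2m-n+1 .
\]

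The second step is the trace identity for signed graphs. Since $\sigma(uv)^2=1$ for every edge, $\tr A(\Sigma)^2=\sum_{u,v}A(\Sigma)_{uv}^2=2m$. Zero eigenvalues contribute nothing, so $\splus(\Sigma)+\sminus(\Sigma)=2m$. Combining this with the display above gives
\[
 \splus(\Sigma)=2m-\sminus(\Sigma)\ge 2m-(2m-n+1)=n-1 .
\]

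For the statement about the minimum, it remains to exhibit one connected signed graph of order $n$ attaining $n-1$. I would take the star $K_{1,n-1}$ with any signature; a tree has no cycles, so every signature of it is balanced and switching-equivalent to the all-positive one. Its spectrum is $\pm\sqrt{n-1}$ together with $0$ of multiplicity $n-2$, so $\splus=n-1$. For $n=1$ both sides equal $0$ trivially. More generally, any tree works: it is bipartite, so its spectrum is symmetric about $0$, giving $\splus=m=n-1$.

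There is no real obstacle here. The only points needing care are that negation preserves order, size and connectivity, and that the trace identity $\splus+\sminus=2m$ carries over verbatim to signed adjacency matrices.
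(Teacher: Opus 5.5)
Your proposal is correct and follows essentially the same route as the paper. You apply \cref{thm:main} to $-\Sigma$ and combine it with the trace identity $\splus(\Sigma)+\sminus(\Sigma)=2m$ (equivalently $\splus(\Sigma)+\splus(-\Sigma)=2m$), then use signed trees for attainment, which is what the paper does via \cref{prop:trees}.
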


Both bounds are sharp.  Every signed tree satisfies $\splus(\Sigma)=n-1$ and attains equality in both bounds.  Balanced complete signed graphs attain equality in the upper bound, whereas antibalanced complete signed graphs attain equality in the lower bound.

For completeness, we include the doubly nonnegative inequality and its proof rather than cite it as a black box.  The folding step and the cut-vertex decomposition follow Liu, Tang, and Zhang~\cite{LiuTangZhang2026}.  In the no-cut-vertex case, however, we replace their final two-case argument by the fixed convex combination
\[
 \frac{n-1}{n}\times(\text{vertex-deletion estimate})
 \;+
 \frac1n\times(\text{edge Cauchy--Schwarz estimate}).
\]
The resulting error term is nonpositive solely because a simple graph has at most $\binom n2$ edges.  This gives a shorter self-contained proof and makes the signed reduction transparent.

The paper is organized as follows.  Section~\ref{sec:preliminaries} gives the required notation and preliminary facts on signed graphs and matrices.  Section~\ref{sec:dnn} proves the doubly nonnegative matrix inequality.  Section~\ref{sec:signed-bound} proves \cref{thm:main} and derives \cref{cor:lower-intro}.  Section~\ref{sec:consequences} records the sharpness examples and the disconnected extension.  In Section~\ref{sec:equality-problem}, we formulate the equality-case problem suggested by the two sharp bounds.

\section{Preliminaries}\label{sec:preliminaries}

All graphs are finite, simple, and undirected.  Let $G=(V,E)$ be a graph with order $n=|V|$ and size $m=|E|$.  A \emph{signed graph} is a pair
\[
  \Sigma=(G,\sigma),
\]
where $\sigma:E(G)\to\{+1,-1\}$ is its \emph{signature}.  The graph $G$ is the \emph{underlying graph} of $\Sigma$, and the order and size of $\Sigma$ are the order and size of $G$.  The signed graph $\Sigma$ is connected if its underlying graph is connected.  We write $+G$ and $-G$ for the all-positive and all-negative signings of $G$, respectively.

The \emph{signed adjacency matrix} of $\Sigma$ is the real symmetric matrix $A(\Sigma)=(a_{uv})_{u,v\in V}$ given by
\[
 a_{uv}=
 \begin{cases}
  \sigma(uv),&uv\in E(G),\\
  0,&uv\notin E(G).
 \end{cases}
\]
In particular, $a_{uu}=0$ for all $u\in V$.  We write
\[
  \lambda_1(\Sigma)\ge\lambda_2(\Sigma)\ge\cdots\ge\lambda_n(\Sigma)
\]
for the eigenvalues of $A(\Sigma)$, counted with multiplicity, and define
\begin{equation}\label{eq:def-square-energies}
 \splus(\Sigma):=\sum_{\lambda_i(\Sigma)>0}\lambda_i(\Sigma)^2,
 \qquad
 \sminus(\Sigma):=\sum_{\lambda_i(\Sigma)<0}\lambda_i(\Sigma)^2.
\end{equation}

For a cycle $C$ of $G$, set $\sigma(C):=\prod_{e\in E(C)}\sigma(e)$.  The signed graph is \emph{balanced} if every cycle has positive sign.  Given a switching function $\theta:V\to\{+1,-1\}$, define
\[
  \sigma^\theta(uv):=\theta(u)\sigma(uv)\theta(v).
\]
If $D_\theta:=\operatorname{diag}(\theta(v):v\in V)$, then
\begin{equation}\label{eq:switching}
 A(G,\sigma^\theta)=D_\theta A(G,\sigma)D_\theta.
\end{equation}
Thus switching preserves the adjacency spectrum and both square energies.  Harary's balance theorem states that a signed graph is balanced if and only if it is switching equivalent to the all-positive signature on its underlying graph~\cite{Harary1953}.  The signed graph is \emph{antibalanced} if it is switching equivalent to the all-negative signature, equivalently, if its negation is balanced.

The \emph{negation} of $\Sigma$ is
\[
  -\Sigma:=(G,-\sigma).
\]
It satisfies
\begin{equation}\label{eq:global-negation}
  A(-\Sigma)=-A(\Sigma).
\end{equation}
Therefore
\begin{equation}\label{eq:negation-exchange}
  \splus(-\Sigma)=\sminus(\Sigma),
  \qquad
  \sminus(-\Sigma)=\splus(\Sigma).
\end{equation}

Let $A=A(\Sigma)$.  Its positive and negative spectral parts are
\[
 A_+:=\sum_{\lambda_i>0}\lambda_i x_ix_i^\top,
 \qquad
 A_-:=-\sum_{\lambda_i<0}\lambda_i x_ix_i^\top,
\]
where $x_1,\ldots,x_n$ is an orthonormal eigenbasis of $A$.  Then
\begin{equation}\label{eq:spectral-parts}
 A=A_+-A_-,
 \qquad A_+,A_-\succeq0,
 \qquad A_+A_-=0,
\end{equation}
and
\begin{equation}\label{eq:energy-parts}
 \splus(\Sigma)=\tr(A_+^2)=\|A_+\|_F^2,
 \qquad
 \sminus(\Sigma)=\tr(A_-^2)=\|A_-\|_F^2.
\end{equation}

For real symmetric matrices $X,Y$, write
\[
 \ip{X}{Y}:=\tr(XY),
 \qquad
 \|X\|_F:=\sqrt{\ip{X}{X}},
\]
and denote their Hadamard product by $X\circ Y$.  A real symmetric matrix $M$ is \emph{doubly nonnegative} if
\[
  M\succeq0
  \qquad\text{and}\qquad
  M\ge0
\]
entrywise.  We use the Schur product theorem: if $X,Y\succeq0$, then $X\circ Y\succeq0$; see~\cite[Section~7.5]{HornJohnson2013}.  For background on doubly nonnegative and completely positive matrices, see~\cite{BermanShakedMonderer2003}.

The following trace identity is fundamental.

\begin{lemma}\label{lem:trace-identity}
If $\Sigma=(G,\sigma)$ has size $m$, then
\begin{equation}\label{eq:trace-2m}
 \splus(\Sigma)+\sminus(\Sigma)=\tr A(\Sigma)^2=2m.
\end{equation}
Consequently,
\begin{equation}\label{eq:positive-complementarity}
 \splus(\Sigma)+\splus(-\Sigma)=2m.
\end{equation}
\end{lemma}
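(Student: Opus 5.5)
The identity follows from computing $\tr A(\Sigma)^2$ in two ways: once through the spectrum, and once through the entries. I then combine it with the negation relation \eqref{eq:negation-exchange}.

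\emph{Spectral computation.} Since $A=A(\Sigma)$ is real symmetric, $\tr A^2=\sum_{i=1}^n\lambda_i(\Sigma)^2$. Split this sum according to the sign of $\lambda_i$. Zero eigenvalues contribute nothing, so the sum equals $\splus(\Sigma)+\sminus(\Sigma)$ by definition \eqref{eq:def-square-energies}. Equivalently, one can use \eqref{eq:spectral-parts} and \eqref{eq:energy-parts}. Expanding $A^2=A_+^2-A_+A_--A_-A_++A_-^2$, the cross terms vanish because $A_+A_-=0$ and hence also $A_-A_+=(A_+A_-)^\top=0$. Taking traces then gives $\tr A^2=\|A_+\|_F^2+\|A_-\|_F^2$.

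\emph{Entrywise computation.} Since $A$ is symmetric, $\tr A^2=\sum_{u,v}a_{uv}a_{vu}=\sum_{u,v}a_{uv}^2$. Every diagonal entry is $0$. An off-diagonal entry satisfies $a_{uv}^2=\sigma(uv)^2=1$ exactly when $uv\in E(G)$, and $a_{uv}^2=0$ otherwise. Each edge is counted twice, once as $(u,v)$ and once as $(v,u)$, so $\tr A^2=2m$. This proves \eqref{eq:trace-2m}.

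\emph{The consequence.} By \eqref{eq:negation-exchange}, $\splus(-\Sigma)=\sminus(\Sigma)$. Substituting this into \eqref{eq:trace-2m} gives \eqref{eq:positive-complementarity}.

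There is no real obstacle here. The only point needing a moment's care is the vanishing of the cross terms, which comes from the orthogonality of the positive and negative eigenspaces. Even that step can be bypassed entirely by the direct eigenvalue-sum argument.
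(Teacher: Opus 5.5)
Your proposal is correct and follows essentially the same route as the paper: compute $\tr A(\Sigma)^2$ entrywise as $2m$, identify it with $\splus(\Sigma)+\sminus(\Sigma)$ via the eigenvalue sum, and then substitute $\splus(-\Sigma)=\sminus(\Sigma)$ from the negation relation. The alternative check through $A_+A_-=0$ is valid but unnecessary.
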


\begin{pf}
Every edge $uv$ contributes the two entries $a_{uv}=a_{vu}=\sigma(uv)$, both of square $1$, and all other off-diagonal entries vanish.  Hence
\[
 \tr A(\Sigma)^2=\sum_{u,v\in V}a_{uv}^2=2m.
\]
The first identity now follows by summing the squares of the eigenvalues, and the second follows from~\eqref{eq:negation-exchange}.
\end{pf}

For a connected graph $G$, define
\begin{equation}\label{eq:q-def}
  q(G):=2|E(G)|-|V(G)|+1.
\end{equation}
We shall prove that $q(G)$ controls a weighted edge sum for every doubly nonnegative matrix indexed by $V(G)$.

\section{A doubly nonnegative edge-sum inequality}\label{sec:dnn}

The next theorem was introduced by Liu, Tang, and Zhang~\cite[Theorem~2.1]{LiuTangZhang2026}.  We give a self-contained proof with a shorter final step.

\begin{theorem}[DNN edge-sum inequality]\label{thm:dnn}
Let $G$ be a connected simple graph, and let $M$ be a doubly nonnegative matrix indexed by $V(G)$.  Then
\begin{equation}\label{eq:dnn-main}
 4\left(\sum_{uv\in E(G)}\sqrt{M_{uv}}\right)^2
 \le q(G)\,\one^\top M\one,
\end{equation}
where each edge is counted once.
\end{theorem}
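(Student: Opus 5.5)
The plan is to argue by induction on $n$, using the block structure of $G$. The proof splits into a base case, a cut-vertex case and a no-cut-vertex case. Throughout, write
\[
S_G(M):=\sum_{uv\in E(G)}\sqrt{M_{uv}},\qquad T(M):=\one^\top M\one .
\]

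\emph{Bipartite graphs, and trees in particular.} Here I would use the sign vector $x=\one_A-\one_B$ of a bipartition $V=A\cup B$. Positive semidefiniteness gives $x^\top Mx\ge0$, that is,
\[
\sum_u M_{uu}\ \ge\ 2\sum_{\text{cross}}M_{uv}-2\sum_{\text{same side}}M_{uv}.
\]
Substituting this into $T(M)=\sum_u M_{uu}+2\sum_{\text{cross}}M_{uv}+2\sum_{\text{same side}}M_{uv}$ yields
\[
T(M)\ \ge\ 4\sum_{\text{cross}}M_{uv}\ \ge\ 4\sum_{uv\in E}M_{uv},
\]
where the last step uses $M\ge0$ entrywise. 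Cauchy--Schwarz then gives $4S_G(M)^2\le 4m\sum_E M_{uv}\le m\,T(M)$. For a tree $m=n-1=q(G)$, which settles the base case.

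\emph{Cut-vertex case.} Suppose $G=G_1\cup G_2$ with $V(G_1)\cap V(G_2)=\{v\}$ and both $G_i$ connected. Then $q(G)=q(G_1)+q(G_2)$, because the $-n+1$ terms add correctly when exactly one vertex is shared. Cauchy--Schwarz gives
\[
(S_1+S_2)^2\le (q_1+q_2)\Bigl(\frac{S_1^2}{q_1}+\frac{S_2^2}{q_2}\Bigr),
\]
so by induction it suffices to find principal-type matrices $M^{(1)}$ on $V(G_1)$ and $M^{(2)}$ on $V(G_2)$ with the following properties:
\begin{itemize}[nosep]
\item each $M^{(i)}$ is doubly nonnegative;
\item $S_{G_i}(M^{(i)})\ge S_{G_i}(M)$;
\item $T(M^{(1)})+T(M^{(2)})\le T(M)$.
\end{itemize}
Naive restriction fails: $M_{vv}$ is counted twice, while the cross terms between $G_1-v$ and $G_2-v$ are discarded. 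The ``folding'' step should repair this. I would move the whole of the second block onto $v$, for instance by replacing the rows of $V(G_2)\setminus\{v\}$ in $M$ by their aggregate (summing coordinates). This is a congruence $PMP^\top$ with $P$ a $0/1$ matrix, so it preserves double nonnegativity and $T$, and in a suitable arrangement it does not decrease the edge sums. Getting this bookkeeping exactly right is, I expect, routine but fiddly.

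\emph{No-cut-vertex case ($G$ $2$-connected, $n\ge3$).} This is the main obstacle. Every vertex $w$ is then a non-cut vertex, and $q(G-w)=q(G)-2d(w)+1$. I would first form a vertex-deletion estimate: apply induction to $M$ restricted to $G-w$, split
\[
S_G(M)=S_{G-w}(M)+\sum_{u\sim w}\sqrt{M_{uw}},
\]
bound the cross term using $M_{uw}\le\sqrt{M_{uu}M_{ww}}$, and average over $w$. The second estimate is the edge Cauchy--Schwarz bound
\[
4S_G(M)^2\le 4m\sum_E M_{uv}.
\]
Following the introduction, I would take the fixed combination $\frac{n-1}{n}\times$(averaged deletion estimate)$\,+\,\frac1n\times$(edge estimate), expand both sides as linear forms in the $M_{uu}$ and $M_{uv}$, and check that the leftover coefficient is a nonnegative multiple of $\binom n2-m$. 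If the averaged deletion bound does not close directly, I would next try optimizing the Young's-inequality parameter in $(a+b)^2\le(1+t)a^2+(1+t^{-1})b^2$ as a function of $d(w)$ and $q(G)$ before averaging.
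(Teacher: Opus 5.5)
Your bipartite argument is correct, and it gives $4S_G(M)^2\le m\,T(M)\le q(G)T(M)$ for every connected bipartite $G$. However, both inductive steps have genuine gaps.

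\textbf{Cut-vertex case.} The construction you propose cannot work. If $P$ is a $0/1$ aggregation matrix, then $P^\top\one=\one$, so $T(PMP^\top)=T(M)$. Whichever block you build this way already uses all of $T(M)$. This forces $T(M^{(2)})\le 0$, hence $M^{(2)}=0$ and $S_{G_2}(M^{(2)})=0$. The real difficulty is not bookkeeping. You must remove the entries between $U=V(G_1)\setminus\{v\}$ and $W=V(G_2)\setminus\{v\}$, and split $M_{vv}$ between the two blocks while keeping both positive semidefinite.

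The missing idea is to fold each non-edge onto the diagonal: $M\mapsto M+\sum_{uv\notin E}M_{uv}(e_u-e_v)(e_u-e_v)^\top$. This preserves positive semidefiniteness, entrywise nonnegativity, $T$, and all edge entries, while killing every $U$--$W$ entry. Then, in a Gram representation $M_{xy}=\ip{z_x}{z_y}$, the spans of $\{z_u\}_{u\in U}$ and $\{z_w\}_{w\in W}$ are orthogonal. Decompose $z_v=p_U+p_W+p_0$, and assign $p_U+p_0$ to $v$ in $G_1$ and $p_W$ to $v$ in $G_2$. This gives doubly nonnegative Gram matrices with the same edge entries and $T(M_1)+T(M_2)=T(M)$ exactly.

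\textbf{No-cut-vertex case.} You have the right two ingredients and the right weights, but not the right deletion estimate. You do not need to bound $r_w=\sum_{u\sim w}\sqrt{M_{uw}}$ at all. Sum the inductive bounds $2(S-r_w)\le\sqrt{q(G-w)T(M-w)}$ over all $w$. The left side becomes $2(n-2)S$ because $\sum_w r_w=2S$. On the right, apply Cauchy--Schwarz over $w$, using $\sum_w q(G-w)=(n-2)(q-1)$ and $\sum_w T(M-w)=(n-2)T+d_0$ with $d_0=\tr M$. This yields $4S^2\le (q-1)T+\frac{q-1}{n-2}d_0$.

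It is this particular estimate that, combined with $4S^2\le(q+n-1)(T-d_0)$ using weights $\frac{n-1}{n}$ and $\frac1n$, leaves exactly the residual $\frac{q-(n-1)^2}{n(n-2)}d_0\le 0$. Your route isolates $r_w$, bounds it through $M_{uw}\le\sqrt{M_{uu}M_{ww}}$ (which introduces fourth roots of diagonal entries), and uses a per-vertex Young parameter. It produces terms $q(G-w)T(M-w)$ with vertex-dependent weights, and for irregular $G$ these do not collapse to a combination of $T$ and $d_0$. So there is no reason the fixed weights should leave a nonnegative multiple of $\binom n2-m$. As written, this step is a hope rather than an argument.
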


\begin{pf}
For a pair $(G,M)$ as in the statement, write
\[
 S(G,M):=\sum_{uv\in E(G)}\sqrt{M_{uv}},
 \qquad
 T(M):=\one^\top M\one.
\]
We prove
\begin{equation}\label{eq:dnn-target}
  4S(G,M)^2\le q(G)T(M)
\end{equation}
by induction on $n:=|V(G)|$.

If $n=1$, then $S(G,M)=0$ and $q(G)=0$.  If $n=2$, then $G=K_2$, $q(G)=1$, and
\[
 M=\begin{pmatrix}a&c\\ c&b\end{pmatrix}
\]
with $a,b,c\ge0$.  Positive semidefiniteness gives $c\le\sqrt{ab}\le(a+b)/2$, and therefore
\[
 4S(G,M)^2=4c\le a+b+2c=T(M).
\]
Assume henceforth that $n\ge3$ and that the theorem holds for all connected graphs of order at most $n-1$.

\medskip
\noindent\emph{Folding non-edges.}
For each unordered non-edge $\{u,v\}$, let $e_u,e_v$ be the corresponding standard basis vectors, and define
\begin{equation}\label{eq:folding}
 N:=M+\sum_{\substack{\{u,v\}\subseteq V(G)\\uv\notin E(G)}}
 M_{uv}(e_u-e_v)(e_u-e_v)^\top.
\end{equation}
Each added summand is positive semidefinite.  It cancels the two off-diagonal entries $M_{uv}$ and $M_{vu}$ and adds their contribution to the diagonal.  Hence $N\succeq0$, $N\ge0$ entrywise,
\[
 N_{uv}=0\quad\text{if }u\ne v\text{ and }uv\notin E(G),
 \qquad
 N_{uv}=M_{uv}\quad\text{if }uv\in E(G).
\]
Furthermore,
\[
 \one^\top(e_u-e_v)(e_u-e_v)^\top\one=0,
\]
so $T(N)=T(M)$, while the edge sums of $N$ and $M$ agree.  We may therefore assume, without loss of generality, that
\begin{equation}\label{eq:support}
  M_{uv}=0\qquad\text{whenever }u\ne v\text{ and }uv\notin E(G).
\end{equation}

\medskip
\noindent\emph{Cut-vertex case.}
Suppose that $v$ is a cut vertex of $G$.  Group the connected components of $G-v$ into two nonempty unions with vertex sets $U$ and $W$, and set
\[
 G_1:=G[U\cup\{v\}],
 \qquad
 G_2:=G[W\cup\{v\}].
\]
Then $G_1$ and $G_2$ are connected, their edge sets partition $E(G)$, and
\begin{equation}\label{eq:q-split}
 q(G_1)+q(G_2)=q(G).
\end{equation}

Since $M\succeq0$, it is the Gram matrix of vectors $z_u$ indexed by $V(G)$:
\[
  M_{uw}=\ip{z_u}{z_w}.
\]
No edge joins $U$ and $W$, so~\eqref{eq:support} implies that
\[
 \mathcal U:=\operatorname{span}\{z_u:u\in U\}
 \quad\text{and}\quad
 \mathcal W:=\operatorname{span}\{z_w:w\in W\}
\]
are orthogonal.  Decompose
\[
 z_v=p_U+p_W+p_0,
 \qquad
 p_U\in\mathcal U,
 \quad p_W\in\mathcal W,
 \quad p_0\perp\mathcal U+\mathcal W.
\]
Let $M_1$ be the Gram matrix of the vectors $(z_u)_{u\in U}$ together with $p_U+p_0$ at $v$, and let $M_2$ be the Gram matrix of $(z_w)_{w\in W}$ together with $p_W$ at $v$.  Both matrices are positive semidefinite.  Their off-diagonal entries are either entries of $M$ or, for an entry involving $v$, equal to the corresponding entry of $M$; hence they are also entrywise nonnegative.  The corresponding quantities satisfy
\begin{align}
 S(G,M)&=S(G_1,M_1)+S(G_2,M_2),\label{eq:S-split}\\
 T(M)&=T(M_1)+T(M_2).\label{eq:T-split}
\end{align}
Indeed,~\eqref{eq:T-split} follows by expressing $T(M)$ as the squared norm of the sum of the Gram vectors and using the orthogonal decomposition above.

The induction hypothesis and the scalar Cauchy--Schwarz inequality now give
\begin{align*}
 2S(G,M)
 &\le \sqrt{q(G_1)T(M_1)}+\sqrt{q(G_2)T(M_2)}\\
 &\le \sqrt{\bigl(q(G_1)+q(G_2)\bigr)
                 \bigl(T(M_1)+T(M_2)\bigr)}\\
 &=\sqrt{q(G)T(M)}.
\end{align*}
Thus~\eqref{eq:dnn-target} holds in the cut-vertex case.

\medskip
\noindent\emph{No-cut-vertex case.}
We may now assume that $G-v$ is connected for every $v\in V(G)$.  Put
\[
 S:=S(G,M),
 \qquad T:=T(M),
 \qquad q:=q(G),
\]
and, using~\eqref{eq:support}, write
\begin{equation}\label{eq:T-d-w}
 T=d_0+2w,
 \qquad
 d_0:=\tr M,
 \qquad
 w:=\sum_{uv\in E(G)}M_{uv}.
\end{equation}
This identity uses the support condition~\eqref{eq:support}.

For $v\in V(G)$, define
\[
 r_v:=\sum_{u\sim v}\sqrt{M_{uv}}.
\]
Let $M-v$ denote the principal submatrix obtained by deleting the row and column indexed by $v$.  Then
\[
 S(G-v,M-v)=S-r_v,
 \qquad
 \sum_{v\in V(G)}r_v=2S.
\]
Moreover, if $d(v)$ is the degree of $v$, then
\begin{equation}\label{eq:q-delete}
 q(G-v)=2(m-d(v))-(n-1)+1=q+1-2d(v).
\end{equation}
By induction,
\[
 2(S-r_v)\le\sqrt{q(G-v)T(M-v)}.
\]
Summing over all vertices and applying Cauchy--Schwarz yields
\begin{equation}\label{eq:delete-CS}
 2(n-2)S
 \le \sum_v\sqrt{q(G-v)T(M-v)}
 \le \sqrt{\left(\sum_vq(G-v)\right)
              \left(\sum_vT(M-v)\right)}.
\end{equation}
The two sums are explicit.  Since $\sum_vd(v)=2m$ and $2m=q+n-1$,
\begin{equation}\label{eq:q-delete-sum}
 \sum_vq(G-v)
 =n(q+1)-4m
 =(n-2)(q-1).
\end{equation}
Every diagonal entry of $M$ occurs in $n-1$ of the principal submatrices $M-v$, while each pair of edge entries occurs in $n-2$ of them.  Hence
\begin{equation}\label{eq:T-delete-sum}
 \sum_vT(M-v)
 =(n-1)d_0+2(n-2)w
 =(n-2)T+d_0.
\end{equation}
Substituting~\eqref{eq:q-delete-sum} and~\eqref{eq:T-delete-sum} into~\eqref{eq:delete-CS} and dividing by $(n-2)^2$ gives the averaged estimate
\begin{equation}\label{eq:averaged-estimate}
 4S^2\le(q-1)T+\frac{q-1}{n-2}d_0.
\end{equation}

A second estimate follows by applying Cauchy--Schwarz to the $m$ edge terms:
\[
 S^2\le m\sum_{uv\in E(G)}M_{uv}=mw.
\]
Using~\eqref{eq:T-d-w} and $2m=q+n-1$, we obtain the edge estimate
\begin{equation}\label{eq:edge-estimate}
 4S^2\le4mw=(q+n-1)(T-d_0).
\end{equation}

We now combine~\eqref{eq:averaged-estimate} and~\eqref{eq:edge-estimate}.  Since $4S^2$ is bounded by each right-hand side, it is bounded by their convex combination with weights $(n-1)/n$ and $1/n$:
\begin{align}
 4S^2
 &\le \frac{n-1}{n}
       \left((q-1)T+\frac{q-1}{n-2}d_0\right)
       +\frac1n(q+n-1)(T-d_0)\notag\\
 &=qT+\frac{q-(n-1)^2}{n(n-2)}d_0.
 \label{eq:convex-closure}
\end{align}
Because $G$ is simple,
\[
 q=2m-n+1\le n(n-1)-n+1=(n-1)^2.
\]
Also $d_0=\tr M\ge0$.  Therefore the final term in~\eqref{eq:convex-closure} is nonpositive, and
\[
 4S^2\le qT.
\]
This completes the induction.
\end{pf}

\begin{remark}\label{rem:convex-simplification}
The only change from the proof in~\cite{LiuTangZhang2026} occurs after the two estimates~\eqref{eq:averaged-estimate} and~\eqref{eq:edge-estimate}.  The original proof divides according to the relative sizes of $d_0$ and $w$.  The fixed convex combination~\eqref{eq:convex-closure} removes this case distinction.  Its residual coefficient is exactly
\[
 \frac{q-(n-1)^2}{n(n-2)},
\]
which is nonpositive by the elementary extremal bound $m\le\binom n2$.
\end{remark}

\section{The positive square-energy bound}\label{sec:signed-bound}

We now apply \cref{thm:dnn} to the positive spectral part of a signed adjacency matrix.

\begin{pf}[Proof of \cref{thm:main}]
The case $n=1$ is immediate.  Assume that $n\ge2$, let
\[
  A:=A(\Sigma),
\]
and let $A_+$ be its positive spectral part.  Set
\begin{equation}\label{eq:M-Aplus}
  M:=A_+\circ A_+.
\end{equation}
Since $A_+\succeq0$, the Schur product theorem gives $M\succeq0$, and clearly $M\ge0$ entrywise.  Thus $M$ is doubly nonnegative.

The sum of all entries of $M$ is
\begin{align}
 \one^\top M\one
 &=\sum_{u,v\in V(G)}(A_+)_{uv}^2\notag\\
 &=\|A_+\|_F^2
 =\splus(\Sigma).
 \label{eq:M-entry-sum}
\end{align}
For every edge $uv\in E(G)$,
\[
 \sigma(uv)(A_+)_{uv}\le |(A_+)_{uv}|=\sqrt{M_{uv}}.
\]
Therefore
\begin{align}
 2\sum_{uv\in E(G)}\sqrt{M_{uv}}
 &\ge2\sum_{uv\in E(G)}\sigma(uv)(A_+)_{uv}\notag\\
 &=\ip{A}{A_+}.
 \label{eq:signed-edge-pairing}
\end{align}
The last equality holds because $A$ has zero diagonal and each undirected edge is counted twice in the matrix inner product.  By the spectral definition of $A_+$,
\[
 AA_+=A_+^2,
\]
and hence
\begin{equation}\label{eq:AAplus}
 \ip{A}{A_+}=\tr(AA_+)=\tr(A_+^2)=\splus(\Sigma).
\end{equation}
Combining~\eqref{eq:signed-edge-pairing} and~\eqref{eq:AAplus}, we obtain
\begin{equation}\label{eq:edge-lower-signed}
  2\sum_{uv\in E(G)}\sqrt{M_{uv}}\ge\splus(\Sigma).
\end{equation}

Applying \cref{thm:dnn} to $M$ and using~\eqref{eq:M-entry-sum} and~\eqref{eq:edge-lower-signed}, we get
\[
 \splus(\Sigma)^2
 \le4\left(\sum_{uv\in E(G)}\sqrt{M_{uv}}\right)^2
 \le(2m-n+1)\splus(\Sigma).
\]
It remains to justify division by $\splus(\Sigma)$.  Since $G$ is connected and $n\ge2$, the matrix $A$ is nonzero.  Also $\tr A=0$.  If $A$ had no positive eigenvalue, then all its eigenvalues would be nonpositive and would sum to zero, forcing all of them to vanish, a contradiction.  Thus $\splus(\Sigma)>0$, and division gives
\[
 \splus(\Sigma)\le2m-n+1.
\]
\end{pf}

We derive the lower bound without a separate argument for the negative spectral part.

\begin{pf}[Proof of \cref{cor:lower-intro}]
The negation $-\Sigma=(G,-\sigma)$ has the same connected underlying graph.  By \cref{thm:main},
\[
  \splus(-\Sigma)\le2m-n+1.
\]
Using~\eqref{eq:positive-complementarity},
\[
 \splus(\Sigma)
 =2m-\splus(-\Sigma)
 \ge2m-(2m-n+1)
 =n-1.
\]
The minimum statement follows because every order-$n$ signed tree attains equality; see \cref{prop:trees} below.
\end{pf}

Applying the same results to $-\Sigma$ also gives
\begin{equation}\label{eq:both-signs}
 n-1\le\sminus(\Sigma)\le2m-n+1.
\end{equation}
Thus the corresponding bounds for $\sminus$ follow from the positive bounds by negation.

\section{Sharpness and consequences}\label{sec:consequences}

We first record the tree case.

\begin{proposition}\label{prop:trees}
If $\Sigma=(T,\sigma)$ is a signed tree of order $n$, then
\[
  \splus(\Sigma)=\sminus(\Sigma)=n-1.
\]
Consequently, every signed tree attains equality in both the upper and lower bounds.
\end{proposition}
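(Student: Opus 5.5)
Two routes are available. One is to squeeze the tree case between the bounds already proved; the other is a direct spectral argument. I will use the direct one, since it needs nothing beyond basic facts.

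\emph{Step 1: switch to the all-positive signing.} A tree has no cycles, so every signed tree is balanced. By Harary's balance theorem, $\Sigma$ is switching equivalent to $+T$. By \eqref{eq:switching}, switching preserves the spectrum, so $\splus(\Sigma)=\splus(T)$ and $\sminus(\Sigma)=\sminus(T)$. (Concretely: root $T$ and let $\theta(v)$ be the product of the signs on the root-to-$v$ path.)

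\emph{Step 2: use bipartiteness to get a symmetric spectrum.} $T$ is bipartite with parts $X,Y$. Let $D$ be the diagonal matrix with $+1$ on $X$ and $-1$ on $Y$. Then $DA(T)D=-A(T)$, so the spectrum of $A(T)$ is symmetric about $0$. Equivalently, $-T$ is a switching of $+T$. It follows that $\splus(T)=\sminus(T)$.

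\emph{Step 3: conclude.} \cref{lem:trace-identity} gives $\splus(T)+\sminus(T)=2m=2(n-1)$. Combined with Step 2, both quantities equal $n-1$.

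For the ``consequently'' clause, note that $q(T)=2(n-1)-n+1=n-1$. So the upper bound \eqref{eq:main-bound} and the lower bound \eqref{eq:lower-intro} both hold with equality.

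There is no real obstacle. The only point needing care is the bipartite sign-symmetry in Step 2, which is the one-line conjugation by $D$ given there.

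An alternative check, which avoids switching: \cref{thm:main} gives $\splus(\Sigma)\le 2m-n+1=n-1$. \cref{cor:lower-intro} gives $\splus(\Sigma)\ge n-1$. Hence $\splus(\Sigma)=n-1$, and $\sminus(\Sigma)=2m-\splus(\Sigma)=n-1$ follows from the trace identity. This is even shorter, so I would present it as the main proof and keep Steps 1–3 as a self-contained verification.
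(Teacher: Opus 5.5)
Your proposal is correct. Your Steps 1--3 are exactly the paper's proof: a tree is balanced, Harary's theorem reduces to $+T$, bipartiteness makes the spectrum symmetric, and the trace identity finishes. The route you say you would present as the main proof is genuinely different, so here is a comparison.

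In that route you squeeze $\splus(\Sigma)$ between the lower bound of \cref{cor:lower-intro} and the upper bound of \cref{thm:main}, which coincide at $n-1$ when $m=n-1$. This is valid and not circular. The reason is that you invoke only the inequality $\splus(\Sigma)\ge n-1$, which the paper derives from \cref{thm:main} applied to $-\Sigma$ together with \eqref{eq:positive-complementarity}. The ``minimum'' sentence of \cref{cor:lower-intro} is itself justified by citing \cref{prop:trees}, so you must not lean on that part.

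What the squeeze buys is brevity once the main theorem is available. What it costs is that the tree example then rests on the whole DNN induction of \cref{thm:dnn}. The paper instead uses trees as an independent, elementary certificate that both bounds are attained.

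The direct route also proves more. Your conjugation $DAD=-A$ works for $A(\Sigma)$ itself whenever the underlying graph is bipartite, whatever the signature, because every edge joins $X$ and $Y$. So Step 1 is actually unnecessary, and the argument gives $\splus(\Sigma)=\sminus(\Sigma)=m$ for every signed bipartite graph. The squeeze, by contrast, only works in the single case $m=n-1$, where the two bounds meet.
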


\begin{pf}
A tree has no cycles, so every signature on $T$ is balanced.  By Harary's theorem, $\Sigma$ is switching equivalent to the all-positive tree $T$.  Since $T$ is bipartite, its adjacency spectrum is symmetric about zero.  Switching preserves the spectrum, and therefore
\[
 \splus(\Sigma)=\sminus(\Sigma).
\]
Because $T$ has $n-1$ edges, \cref{lem:trace-identity} gives
\[
 \splus(\Sigma)+\sminus(\Sigma)=2(n-1),
\]
which proves the result.
\end{pf}

Complete signed graphs provide dense equality examples for both ends of the interval.  The all-positive complete graph has spectrum
\[
  \{n-1,-1^{(n-1)}\},
\]
so
\[
 \splus(+K_n)=(n-1)^2=2\binom n2-n+1.
\]
Thus equality holds in the upper bound.  The all-negative complete graph has spectrum
\[
  \{1^{(n-1)},-(n-1)\},
\]
and hence
\[
 \splus(-K_n)=n-1.
\]
By switching invariance, the upper-bound equality example extends to every balanced complete signed graph, and the lower-bound equality example extends to every antibalanced complete signed graph.

The next proposition characterizes simultaneous equality in the two bounds.

\begin{proposition}\label{prop:simultaneous}
Let $\Sigma=(G,\sigma)$ be connected of order $n$.  Then
\[
 \splus(\Sigma)=\sminus(\Sigma)=n-1
\]
if and only if the underlying graph $G$ is a tree.
\end{proposition}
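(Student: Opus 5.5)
The reverse implication is \cref{prop:trees}, so only the forward direction needs work. Suppose $\splus(\Sigma)=\sminus(\Sigma)=n-1$. By \cref{lem:trace-identity},
\[
 2m=\splus(\Sigma)+\sminus(\Sigma)=2(n-1),
\]
so $m=n-1$. A connected graph on $n$ vertices with exactly $n-1$ edges is a tree, which settles this direction.

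I will also check the remaining degenerate cases. For $n=1$ both quantities are $0=n-1$ and $G=K_1$ is a tree, so the statement holds. The converse direction only uses the tree argument already given in \cref{prop:trees}: every signature on a tree is balanced, and a balanced signature on a bipartite graph has spectrum symmetric about zero.

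No step here is a genuine obstacle. The only point needing care is to use the trace identity in the form $\splus+\sminus=2m$, and not the bounds of \cref{thm:main}. The bounds alone would only give $n-1\le 2m-n+1$, which is not enough to force $m=n-1$.
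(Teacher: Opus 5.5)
Your proposal is correct and follows the paper's proof essentially verbatim. In the forward direction, \cref{lem:trace-identity} gives $2m=2(n-1)$, and connectivity then forces $G$ to be a tree. The converse is exactly \cref{prop:trees}.
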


\begin{pf}
If both square energies equal $n-1$, then \cref{lem:trace-identity} gives $2m=2n-2$, so $m=n-1$.  Since $G$ is connected, it is a tree.  The converse follows from \cref{prop:trees}.
\end{pf}

Finally, additivity over connected components yields the natural disconnected form.

\begin{corollary}\label{cor:disconnected}
Let $\Sigma=(G,\sigma)$ be a signed graph of order $n$ and size $m$, and let $\kappa(G)$ be the number of connected components of $G$.  Then
\begin{equation}\label{eq:disconnected-two-sided}
 n-\kappa(G)
 \le\splus(\Sigma)
 \le2m-n+\kappa(G).
\end{equation}
The same bounds hold for $\sminus(\Sigma)$.
\end{corollary}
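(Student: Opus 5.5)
We argue component by component and sum. Let $G_1,\dots,G_\kappa$ be the connected components of $G$, with $\kappa=\kappa(G)$, and let $G_i$ have order $n_i$ and size $m_i$, so that $\sum_i n_i=n$ and $\sum_i m_i=m$. Write $\Sigma_i=(G_i,\sigma|_{E(G_i)})$ for the induced signed components. Order the vertices component by component. Then $A(\Sigma)$ is block diagonal with diagonal blocks $A(\Sigma_i)$. The multiset of eigenvalues of $A(\Sigma)$ is therefore the union of the spectra of the blocks, and directly from the definition~\eqref{eq:def-square-energies} we obtain the additivity
\[
 \splus(\Sigma)=\sum_{i=1}^{\kappa}\splus(\Sigma_i),
 \qquad
 \sminus(\Sigma)=\sum_{i=1}^{\kappa}\sminus(\Sigma_i).
\]

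Each $\Sigma_i$ is a connected signed graph, so we may apply \cref{thm:main} and \cref{cor:lower-intro} to it. This gives
\[
 n_i-1\le\splus(\Sigma_i)\le 2m_i-n_i+1 .
\]
An isolated vertex has $n_i=1$ and $m_i=0$. Both bounds then read $0\le 0\le 0$, consistent with $\splus=0$; this is also the $n=1$ case of \cref{thm:main}, so nothing special is needed. Summing over $i$ and using additivity yields
\[
 n-\kappa\le\splus(\Sigma)\le 2m-n+\kappa,
\]
which is~\eqref{eq:disconnected-two-sided}.

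For $\sminus(\Sigma)$, we apply the same argument to the negation $-\Sigma$. It has the same underlying graph, hence the same $n$, $m$ and $\kappa$. By~\eqref{eq:negation-exchange}, $\sminus(\Sigma)=\splus(-\Sigma)$. Alternatively, we can sum the componentwise bounds~\eqref{eq:both-signs} directly.

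The main point to check is the block-diagonal additivity of the square energies, which is immediate. The argument has no real obstacle beyond confirming that the connected results apply to every component, including trivial ones.
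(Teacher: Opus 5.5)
Your proposal is correct and follows essentially the same route as the paper. Both use block-diagonal additivity of $\splus$ over the connected components, apply \cref{thm:main} and \cref{cor:lower-intro} to each component (including isolated vertices), sum the bounds, and obtain the $\sminus$ bounds by passing to $-\Sigma$.
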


\begin{pf}
Let $\Sigma_1,\ldots,\Sigma_{\kappa(G)}$ be the signed connected components, with orders $n_i$ and sizes $m_i$.  The signed adjacency matrix is block diagonal, so $\splus$ is additive over components.  Applying \cref{thm:main,cor:lower-intro} to every component, including one-vertex components, gives
\[
 \splus(\Sigma)
 =\sum_i\splus(\Sigma_i)
 \ge\sum_i(n_i-1)
 =n-\kappa(G)
\]
and
\[
 \splus(\Sigma)
 \le\sum_i(2m_i-n_i+1)
 =2m-n+\kappa(G).
\]
The assertion for $\sminus$ follows by applying the positive statement to $-\Sigma$.
\end{pf}

\begin{remark}\label{rem:signed-unsigned-separation}
The proof keeps the signed and unsigned parts of the argument separate.  The DNN inequality \eqref{eq:dnn-main} depends only on the underlying graph $G$.  The signature appears only in the comparison
\[
 \sigma(uv)(A_+)_{uv}\le |(A_+)_{uv}|.
\]
Thus the edge sum in \cref{thm:dnn} must remain unsigned and nonnegative; inserting $\sigma(uv)$ into that theorem would destroy the doubly nonnegative matrix inequality.  This distinction is essential when passing from graphs to signed graphs.
\end{remark}

\section{An equality-case problem}\label{sec:equality-problem}

The preceding results identify several equality families, but they do not give a complete description of either extremal class.  Since the adjacency spectrum, and hence $\splus$, is invariant under switching, the natural classification is up to switching and isomorphism.

\begin{problem}\label{prob:equality-cases}
Characterize, up to switching and isomorphism, all connected signed graphs $\Sigma=(G,\sigma)$ of order $n$ and size $m$ for which equality holds in at least one of the sharp bounds
\[
  n-1\le \splus(\Sigma)\le 2m-n+1.
\]
Equivalently, determine all connected signed graphs satisfying
\begin{enumerate}[label=\textup{(\roman*)}]
 \item $\splus(\Sigma)=2m-n+1$;
 \item $\splus(\Sigma)=n-1$.
\end{enumerate}
\end{problem}

Every signed tree satisfies both equalities.  Balanced complete signed graphs satisfy~\textup{(i)}, and antibalanced complete signed graphs satisfy~\textup{(ii)}.  The two equality classes intersect exactly in the signed trees: indeed, equality in both bounds gives $2m-n+1=n-1$, and hence $m=n-1$.  Determining all remaining equality cases is an open problem.

\section*{Acknowledgments}

Supported by National Natural Science Foundation of China (No. 12331012).


\begin{thebibliography}{99}

\bibitem{AbiadEtAl2023}
A.~Abiad, L.~de Lima, D.~N.~Desai, K.~Guo, L.~Hogben, and J.~Madrid,
\emph{Positive and negative square energies of graphs},
Electron. J. Linear Algebra \textbf{39} (2023), 307--326.

\bibitem{AkbariEtAl2025}
S.~Akbari, H.~Kumar, B.~Mohar, and S.~Pragada,
\emph{A linear lower bound for the square energy of graphs},
Electron. J. Combin. \textbf{32} (2025), no.~3, Paper No.~P3.53.

\bibitem{AndoLin2015}
T.~Ando and M.~Lin,
\emph{Proof of a conjectured lower bound on the chromatic number of a graph},
Linear Algebra Appl. \textbf{485} (2015), 480--484.

\bibitem{BelardoEtAl2018}
F.~Belardo, S.~M.~Cioab\u{a}, J.~H.~Koolen, and J.~Wang,
\emph{Open problems in the spectral theory of signed graphs},
Art Discrete Appl. Math. \textbf{1} (2018), no.~2, Paper No.~P2.10, 23 pp.

\bibitem{BermanShakedMonderer2003}
A.~Berman and N.~Shaked-Monderer,
\emph{Completely Positive Matrices},
World Scientific, River Edge, NJ, 2003.

\bibitem{CoutinhoSpier2023}
G.~Coutinho and T.~J.~Spier,
\emph{Sums of squares of eigenvalues and the vector chromatic number},
arXiv:2308.04475, 2023.

\bibitem{CoutinhoSpierZhang2024}
G.~Coutinho, T.~J.~Spier, and S.~Zhang,
\emph{Conic programming to understand sums of squares of eigenvalues of graphs},
arXiv:2411.08184, 2024.

\bibitem{ElphickAouchiche2017}
C.~Elphick and M.~Aouchiche,
\emph{Nordhaus--Gaddum and other bounds for the sum of squares of the positive eigenvalues of a graph},
Linear Algebra Appl. \textbf{530} (2017), 150--159.

\bibitem{ElphickFarberGoldbergWocjan2016}
C.~Elphick, M.~Farber, F.~Goldberg, and P.~Wocjan,
\emph{Conjectured bounds for the sum of squares of positive eigenvalues of a graph},
Discrete Math. \textbf{339} (2016), no.~9, 2215--2223.

\bibitem{ElphickLinz2024}
C.~Elphick and W.~Linz,
\emph{Symmetry and asymmetry between positive and negative square energies of graphs},
Electron. J. Linear Algebra \textbf{40} (2024), 418--432.

\bibitem{ElphickTangZhang2026}
C.~Elphick, Q.~Tang, and S.~Zhang,
\emph{A spectral lower bound on chromatic numbers using $p$-energy},
European J. Combin. \textbf{132} (2026), Part~B, Article No.~104252.

\bibitem{GerminaHameedZaslavsky2011}
K.~A.~Germina, S.~Hameed K., and T.~Zaslavsky,
\emph{On products and line graphs of signed graphs, their eigenvalues and energy},
Linear Algebra Appl. \textbf{435} (2011), no.~10, 2432--2450.

\bibitem{Harary1953}
F.~Harary,
\emph{On the notion of balance of a signed graph},
Michigan Math. J. \textbf{2} (1953/1954), no.~2, 143--146.

\bibitem{Hong1988}
Y.~Hong,
\emph{A bound on the spectral radius of graphs},
Linear Algebra Appl. \textbf{108} (1988), 135--139.

\bibitem{HornJohnson2013}
R.~A.~Horn and C.~R.~Johnson,
\emph{Matrix Analysis}, second ed.,
Cambridge University Press, Cambridge, 2013.

\bibitem{LiuNing2023}
L.~Liu and B.~Ning,
\emph{Unsolved problems in spectral graph theory},
Oper. Res. Trans. \textbf{27} (2023), no.~4, 33--60.

\bibitem{LiuTangZhang2026}
Y.~Liu, Q.~Tang, and S.~Zhang,
\emph{The positive and negative square-energy conjecture},
arXiv:2607.18031, 2026.

\bibitem{Stanic2026}
Z.~Stani\'c,
\emph{Spectra of Signed Graphs},
Cambridge University Press, Cambridge, 2026.

\bibitem{TangLiuWang2026}
Q.~Tang, Y.~Liu, and W.~Wang,
\emph{On the positive and negative $p$-energies of graphs under edge addition},
Discrete Appl. Math. \textbf{388} (2026), 25--33.

\bibitem{WocjanElphick2013}
P.~Wocjan and C.~Elphick,
\emph{New spectral bounds on the chromatic number encompassing all eigenvalues of the adjacency matrix},
Electron. J. Combin. \textbf{20} (2013), no.~3, Paper No.~P39.

\bibitem{Zaslavsky1982}
T.~Zaslavsky,
\emph{Signed graphs},
Discrete Appl. Math. \textbf{4} (1982), no.~1, 47--74;
erratum, ibid. \textbf{5} (1983), 248.

\bibitem{Zhang2024}
S.~Zhang,
\emph{Extremal values for the square energies of graphs},
arXiv:2409.15504, 2024.

\end{thebibliography}
\end{document}